\newtheorem{thm}{Theorem}
\newtheorem{lem}[thm]{Lemma}
\newtheorem{cor}[thm]{Corollary}
\theoremstyle{remark}
\theoremstyle{definition}
\newcommand{\R}{\mathbb{ R}}
\newcommand{\Z}{\mathbb{ Z}}
\DeclareMathOperator{\SL}{SL}
\DeclareMathOperator{\Sol}{Sol}
\DeclareMathOperator{\Nil}{Nil}
\newcommand{\Hyp}{\mathbb{ H}}
\title{Three-manifolds and K\"ahler groups}
\author{D.~Kotschick}
\address{Mathematisches Institut, {\smaller LMU} M\"unchen,
Theresienstr.~39, 80333~M\"unchen, Germany}
\email{dieter@member.ams.org}
\date{February 10, 2011; \copyright{\ D.~Kotschick 2010}}
\subjclass[2000]{primary 32Q15, 57M05; secondary 14F35, 32J15, 57M50}
\begin{document}

\begin{abstract}
We give a simple proof of a result originally due to Dimca and Suciu~\cite{DS}: a group that is both K\"ahler 
and the fundamental group of a closed three-manifold is finite. We also prove that a group that is both 
the fundamental group of a closed three-manifold and of a non-K\"ahler compact complex surface 
is $\Z$ or $\Z\oplus\Z_2$.
\end{abstract}

\maketitle


\section{Introduction}

In the late 1980s the study of K\"ahler groups, that is, fundamental groups of closed K\"ahler manifolds, 
took off in spectacular fashion. While restrictions on such groups were previously known because of Hodge theory
and because of rational homotopy theory, several deep new results were proved around 1988. I will only
recall two of them here. These and many other results on K\"ahler groups are discussed in detail in~\cite{ABCKT}.

Firstly, generalising partial results of Johnson and Rees~\cite{JR}, Gromov proved:
\begin{thm}[Gromov~\cite{G}]\label{t:G}
A K\"ahler group does not split as a nontrivial free product.
\end{thm}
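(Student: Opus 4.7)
The plan is to assume for contradiction that a K\"ahler group $G=\pi_1(X)$ splits nontrivially as $G=A*B$, and then to derive an incompatibility between this splitting and the K\"ahler structure via equivariant harmonic maps to the Bass--Serre tree.

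First, Bass--Serre theory converts the splitting into a simplicial $G$-action on a tree $T$ with trivial edge stabilizers and no global fixed point or fixed end (the latter fails for a nontrivial free product since one can produce two hyperbolic elements of $G$ with distinct axes). Starting from a continuous $G$-equivariant map $\tilde X \to T$, which exists because $G$ has a finite $K(G,1)$ model, I would minimize equivariant Dirichlet energy. By the Korevaar--Schoen theory of harmonic maps into $\mathrm{NPC}$ targets together with the Gromov--Schoen regularity results, the reductive action on $T$ admits an energy minimizer, yielding a $G$-equivariant harmonic map $f: \tilde X \to T$.

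Second, because $T$ is a one-dimensional $\mathrm{NPC}$ space, a Siu--Sampson Bochner identity adapted to singular targets forces $f$ to be pluriharmonic; one-dimensionality of $T$ further implies that $df$ has real rank at most one on an open dense subset, and a Stein factorization argument produces a surjective holomorphic map $h: X \to C$ with connected fibers onto a compact Riemann orbifold $C$, through which the $G$-action on $T$ descends to a fixed-point-free action of $\pi_1^{\mathrm{orb}}(C)$. By Bass--Serre, $\pi_1^{\mathrm{orb}}(C)$ then inherits a nontrivial free product decomposition. Since the action has no global fixed point, $C$ must be hyperbolic; but compact hyperbolic $2$-orbifold groups are one-ended, so Stallings' theorem on ends of groups rules out any splitting over the trivial group. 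This is the desired contradiction.

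The main obstacle is clearly the analytic middle step: constructing the equivariant harmonic map into a singular target, and establishing pluriharmonicity despite the singularities of $T$, are precisely what require the full Gromov--Schoen regularity theory --- smoothness of $f$ outside a codimension-two singular set with enough control across it to justify the Bochner computation. Once this analytic heart is in place, the reduction via Bass--Serre and the final appeal to Stallings are essentially formal.
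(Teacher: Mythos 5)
The paper does not actually prove this statement: Theorem~\ref{t:G} is imported wholesale from Gromov's note~\cite{G}, and the only related argument in the text is Lemma~\ref{l:JR}, which handles the special case where both free factors admit nontrivial finite quotients by a purely elementary route --- Kurosh subgroup theorem, an Euler-characteristic count showing some finite-index subgroup has odd $b_1$, and the Hodge-theoretic evenness of $b_1$ for K\"ahler groups. That special case suffices for the paper's application because three-manifold groups are residually finite. Your outline is instead the Gromov--Schoen harmonic-map proof of the full statement (equivariant energy minimizers into the Bass--Serre tree, pluriharmonicity via a singular Bochner formula, factorization through a fibration over a hyperbolic orbifold, then Bass--Serre plus Stallings); this is a genuine and standard route, different both from the paper's elementary partial argument and from Gromov's original $L^2$-cohomology argument, and it is essentially the proof recorded in Chapter~6 of~\cite{ABCKT}. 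What your route buys is the full theorem with no residual-finiteness hypothesis; what the paper's Lemma~\ref{l:JR} buys is a two-paragraph proof needing no analysis at all, at the price of covering only free products of groups with finite quotients.

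Two caveats on your outline. First, the claim that one can always find two hyperbolic elements with distinct axes fails for $A\star B$ with $|A|=|B|=2$: the Bass--Serre tree of $\Z_2\star\Z_2$ is a line and every hyperbolic element has that line as its axis, so the action is elementary and the existence theory for the energy minimizer does not apply as stated. This case must be split off and killed directly (the infinite dihedral group is virtually $\Z$, so a finite-index subgroup has $b_1=1$, which is odd). Second, two smaller inaccuracies: the equivariant map $\tilde X\to T$ does not require a finite $K(G,1)$ (K\"ahler groups need not admit one --- finite groups already fail); it only requires that $\tilde X$ be a cocompact free $G$-complex and $T$ contractible. And the hyperbolicity of the base orbifold $C$ is not an immediate consequence of the absence of a global fixed point; it is part of the conclusion of the factorization theorem, obtained by ruling out spherical and Euclidean base orbifolds, whose groups are virtually trivial or virtually $\Z^2$ and hence cannot act on a tree with trivial edge stabilizers and no fixed point. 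With these repairs the argument closes up correctly.
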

Secondly, building on work of Siu, Sampson and others, Carlson and Toledo proved:
\begin{thm}[Carlson--Toledo~\cite{CT}]\label{t:CT}
No fundamental group of a closed real hyperbolic $n$-manifold with $n\geq 3$ is a K\"ahler group.
\end{thm}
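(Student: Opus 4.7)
The plan is to assume for contradiction that some closed K\"ahler manifold $X$ has fundamental group $\G = \pi_1(M)$, where $M$ is a closed real hyperbolic $n$-manifold with $n \geq 3$, and to derive a contradiction by combining the existence theory for harmonic maps with a Bochner identity forced by the K\"ahler hypothesis.

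The first step is routine. Since $M$ is aspherical, the identification $\pi_1(X)\cong\pi_1(M)$ is realized by a classifying map $f\colon X \to M$. Because $M$ has strictly negative sectional curvature, the Eells--Sampson theorem produces a harmonic representative $u\colon X \to M$ homotopic to $f$, which in particular induces the given isomorphism on $\pi_1$.

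The main obstacle is the analytic step: showing that the equivariant lift $\tilde u\colon\tilde X\to \Hyp^n$ takes values in a single totally geodesic copy of $\Hyp^k$ with $k\leq 2$. For K\"ahler domains and targets with non-positive complexified sectional curvature, Sampson's Bochner identity already forces $u$ to be pluriharmonic; to obtain the rank bound one must further exploit the fact that $\Hyp^n$ is not Hermitian symmetric for $n\geq 3$. This sharper conclusion is the substance of the Carlson--Toledo argument, a careful analysis of the curvature terms in the Siu--Sampson Bochner formula, and it is the only place where the hypothesis $n\geq 3$ enters in an essential way.

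Once the image of $\tilde u$ is contained in some totally geodesic $\Hyp^k$ with $k\leq 2$, the conclusion is quick. By equivariance, $\G$ preserves this $\Hyp^k$; its stabilizer in $\operatorname{Isom}(\Hyp^n)$ splits as $\operatorname{Isom}(\Hyp^k)\times O(n-k)$. Since $\G$ is torsion-free and discrete, $\G\cap O(n-k)=\{e\}$, and because $O(n-k)$ is compact the projection $\G\to\operatorname{Isom}(\Hyp^k)$ is an injective homomorphism with discrete image. Thus $\G$ embeds as a discrete torsion-free subgroup of $\operatorname{Isom}(\Hyp^k)$ with $k\leq 2$, forcing $\operatorname{cd}(\G)\leq 2$. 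But $\G=\pi_1(M)$ with $M$ a closed aspherical $n$-manifold has $\operatorname{cd}(\G)=n\geq 3$, the desired contradiction.
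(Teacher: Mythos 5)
First, a point of reference: the paper does not prove this statement at all --- Theorem~\ref{t:CT} is quoted from Carlson--Toledo~\cite{CT} and used as a black box in the proof of Theorem~\ref{t:main1}. So your proposal can only be measured against the original argument of~\cite{CT}, whose architecture you have reproduced correctly: Eells--Sampson existence of a harmonic representative of the classifying map, the Siu--Sampson Bochner identity on a K\"ahler domain, a restriction on the harmonic map special to non-Hermitian targets, and an algebraic endgame via cohomological dimension. Your endgame is fine as far as it goes: a discrete subgroup of $\operatorname{Isom}(\Hyp^k)\times O(n-k)$ does project injectively and discretely to $\operatorname{Isom}(\Hyp^k)$ once torsion-freeness kills the intersection with the compact factor, and $\operatorname{cd}(\G)\leq k\leq 2$ contradicts $\operatorname{cd}(\G)=n\geq 3$.

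The genuine gap is in the middle step, and it is not only that you assert it without proof --- the statement you assert is stronger than what the Bochner analysis delivers, and I do not believe it is available in the form you use it. What Sampson and Carlson--Toledo actually extract from the vanishing of the curvature term for a constant-curvature target is that $\partial u$ has complex rank $\leq 1$, i.e.\ $du$ has real rank $\leq 2$ everywhere, together with pluriharmonicity. This does \emph{not} confine the image of $\tilde u$ to a totally geodesic $\Hyp^k$ with $k\leq 2$: a rank-$2$ pluriharmonic map generically sweeps out a (branched) minimal surface, and minimal surfaces in hyperbolic manifolds are almost never totally geodesic. (For instance, project a product of curves $C\times C'$ onto $C$ and compose with a $\pi_1$-injective, non-totally-geodesic minimal immersion $C\to M$; the composite is harmonic, pluriharmonic, of rank $2$, and its image lies in no totally geodesic subspace.) The way Carlson--Toledo actually close the argument is via their factorization theorem: a nonconstant pluriharmonic map of rank $\leq 2$ to a negatively curved target either has image a closed geodesic or factors through a holomorphic map onto a Riemann surface $C$. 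In either case the isomorphism $\pi_1(X)\to\G$ factors as $\pi_1(X)\twoheadrightarrow Q\to\G$ with $Q$ infinite cyclic or a (orbifold) surface group, so $\pi_1(X)\cong\G$ embeds into a group of cohomological dimension $\leq 2$, and your final contradiction with $\operatorname{cd}(\G)=n\geq 3$ then goes through verbatim. So the skeleton is right, but the pivotal claim needs to be replaced by the rank bound plus the factorization theorem, and proving those is the actual content of~\cite{CT}.
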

When these results were proved, several people, including Donaldson and Goldman, noticed the contrast between K\"ahler groups 
on the one hand and three-manifold groups on the other: the latter are closed under free products, and, according to Thurston, 
most three-manifolds with freely indecomposable fundamental group are hyperbolic. Moreover, a case by case check of the Thurston 
geometries as explained in~\cite{Scott} shows the following: closed three-manifolds carrying one of the geometries $S^2\times\R$, 
$\Hyp^2\times\R$, $\R^3$ or $\Sol^3$ have virtually odd first Betti number, and so their fundamental groups cannot be K\"ahler. 
Moreover, closed three-manifolds carrying one of the geometries $\Nil^3$ or $\SL_2(\R)$ have virtually positive first Betti numbers with trivial 
cup product from $H^1$ to $H^2$. Their fundamental groups cannot be K\"ahler by the Hard Lefschetz Theorem.
Now, the only Thurston geometry that has not been excluded is $S^3$, where every fundamental group is finite. Since all finite 
groups are K\"ahler, it was natural to expect that the intersection of three-manifold groups with the K\"ahler groups should consist
exactly of the finite groups appearing as fundamental groups of three-manifolds with geometry $S^3$. The obstacle to turning 
this expectation into a theorem, indeed a corollary of the above Theorems~\ref{t:G} and~\ref{t:CT}, came from three-manifolds with a 
non-trivial JSJ decomposition along incompressible tori. While one could imagine that those manifolds containing at least some 
hyperbolic piece might yield to a generalisation of the harmonic map techniques of Carlson and Toledo~\cite{CT}\footnote{A first 
step in this direction was soon taken by Hern\'andez-Lamoneda, although his paper~\cite{Her} was only published much later.}, 
the case of graph manifolds seemed intractable.

Twenty years ago one thought about such questions modulo Thurston's geometrisation conjecture. Since this has now 
been proved by Perelman~\cite{P1,P2,KL,MT}, an unconditional result can finally be obtained. Indeed, Dimca and Suciu recently proved:
\begin{thm} [Dimca--Suciu~\cite{DS}]\label{t:main1}
Assume that a group $\Gamma$ is the fundamental group both of a closed K\"ahler manifold and of a 
closed three-manifold. Then $\Gamma$ is finite, and, therefore, a finite subgroup of $O(4)$.
\end{thm}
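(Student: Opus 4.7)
The plan is to combine Perelman's geometrisation theorem with Theorems~\ref{t:G} and~\ref{t:CT}, plus the elementary Betti-number and cup-product arguments sketched in the introduction, to eliminate every possibility other than $M$ carrying the $S^3$ geometry.

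\emph{Reduction to the prime case.} Since finite-index subgroups of K\"ahler groups are again K\"ahler (pull back the K\"ahler form to a finite \'etale cover), I may pass to the orientation double cover and assume $M$ is orientable. The Kneser prime decomposition $M = M_1 \# \cdots \# M_k$ induces a free product decomposition $\Ga = \pi_1(M_1) * \cdots * \pi_1(M_k)$, so Theorem~\ref{t:G} forces all but one $\pi_1(M_i)$ to be trivial. Replacing $M$ by the surviving prime summand, either $M \cong S^2 \x S^1$, in which case $\Ga \cong \Z$ has odd first Betti number and is not K\"ahler, or $M$ is irreducible.

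\emph{The geometric case.} If $M$ is irreducible and geometric, then by Perelman--Thurston it carries one of the eight model geometries. The six geometries $S^2 \x \R$, $\R^3$, $\Nil^3$, $\Sol^3$, $\Hyp^2 \x \R$, $\SL_2(\R)$ are ruled out by the virtual-odd-$b_1$ or trivial-cup-product obstructions recalled in the introduction, while $\Hyp^3$ is excluded by Theorem~\ref{t:CT}. Only the $S^3$ geometry survives, in which case $\Ga$ is finite, and the standard classification of free $S^3$-actions exhibits it as a subgroup of $O(4)$.

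\emph{The non-geometric case.} The remaining and genuinely hard case is when $M$ is irreducible with non-trivial JSJ decomposition. Here $\Ga$ is the fundamental group of a graph of groups with $\Z^2$ edge groups (the incompressible JSJ tori) and Seifert-fibred or hyperbolic vertex groups. The plan is to pass to a finite \'etale cover $\tilde M \to M$ with positive first Betti number and, ideally, a virtual fibration over $S^1$ or over a surface of positive genus, and then to combine the Hard Lefschetz theorem and a Castelnuovo--de Franchis type argument with the inherited JSJ splitting of $\pi_1(\tilde M)$ to force the vertex pieces into one of the geometric cases already handled. This virtual-cover step is the main obstacle: it is precisely the issue that made graph manifolds look intractable before geometrisation, and controlling it uniformly across pure graph manifolds and mixed hyperbolic/Seifert JSJ structures is what the paper presumably accomplishes in a direct way.
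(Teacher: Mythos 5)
Your proposal has a genuine gap: the ``non-geometric case'' (irreducible $M$ with non-trivial JSJ decomposition) is not actually proved. You correctly identify it as the main obstacle, but what you offer is only a plan --- pass to a finite cover with positive $b_1$, find a virtual fibration, and ``force the vertex pieces into one of the geometric cases'' --- and none of these steps is carried out or even reduced to a citable result. In particular, controlling a virtual fibration across mixed Seifert/hyperbolic JSJ structures is exactly the difficulty that made this case look intractable, and your sketch does not resolve it.

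The paper sidesteps this entirely. Its key tool is Theorem~\ref{t:alb}: if $b_1(\Gamma)>0$ and $H^*(\Gamma;\R)$ satisfies $3$-dimensional oriented Poincar\'e duality, then $\Gamma$ is not K\"ahler. The proof uses the Albanese map $\alpha_X$ of a K\"ahler manifold $X$ with $\pi_1(X)=\Gamma$: since $\alpha_X$ factors through $B\Gamma$, which has no real cohomology above degree $3$, the Albanese image must be a smooth curve $C$; hence every cup product of classes in $H^1(\Gamma;\R)$ landing in $H^2(\Gamma;\R)$ pairs trivially with $H^1(\Gamma;\R)$ (it comes from the two-dimensional $C$), contradicting Poincar\'e duality in dimension $3$. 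This applies uniformly to \emph{every} aspherical three-manifold with a finite orientable cover of positive $b_1$ --- graph manifolds, mixed JSJ manifolds, and the geometric cases alike --- so no case-by-case analysis of geometries or JSJ pieces is needed. What remains is the virtual $\R$-homology sphere case: there the paper invokes Luecke's theorem (an aspherical $M$ containing an incompressible torus has a finite cover with $b_1>0$, returning us to the previous case), and otherwise $M$ is hyperbolic by Perelman and Theorem~\ref{t:CT} applies. Your reduction to the prime case via Theorem~\ref{t:G} and your treatment of $S^2\times S^1$ and the hyperbolic case do match the paper; it is the replacement of your unproved JSJ analysis by the Albanese/Poincar\'e-duality argument that makes the proof go through.
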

Once one proves $\Gamma$ to be finite, it follows from Perelman's work~\cite{P1,P2,KL,MT} that $\Gamma$ is a finite subgroup of $O(4)$
acting freely on $S^3$. Note that by a classical construction due to Serre, every finite group is the fundamental group of a smooth complex projective 
variety, hence a closed K\"ahler manifold. By the Lefschetz hyperplane theorem one may assume this variety to be a surface.

To me, a surprising aspect of the proof given by Dimca and Suciu is that it does not follow the above outline at all, and makes little use 
of the Thurston approach to three-manifolds. In fact, their proof does not use Theorems~\ref{t:G} and~\ref{t:CT}. Instead, they 
consider separately the cases of trivial and of nontrivial first Betti number. If the first Betti number of the fundamental group
of a closed oriented three-manifold is positive, then they prove it is not K\"ahler using a lot of machinery of a very different sort: 
characteristic and resonance varieties, Catanese's 
approach to the Siu--Beauville theorem, a commutative algebra result of Buchsbaum--Eisenbud, \ldots . Then, for the case of 
zero first Betti number, Dimca and Suciu appeal to results of Reznikov and Fujiwara pertaining to Kazhdan's property $T$.
It is only at this point that their proof depends on geometrisation via Fujiwara's arguments.

The present paper arose from my attempt to understand the argument of Dimca and Suciu~\cite{DS}. From their 
treatment of the positive Betti number case I extracted the following strategy for obtaining a contradiction: 
{\it If $\Gamma$ has positive first Betti
number and is both the fundamental group of a closed oriented three-manifold  and of a closed K\"ahler manifold, then
$H^1(\Gamma;\R)$ comes from a complex curve. Therefore all cup products of classes in $H^1(\Gamma;\R)$ also 
come from a curve, and this is incompatible with three-dimensional Poincar\'e duality.}

One can actually implement this strategy in several different ways to prove Theorem~\ref{t:main1}. 
Here I will give quite a different implementation from that in~\cite{DS}, leading to a quick proof of the following:
\begin{thm}\label{t:alb}
If $\Gamma$ is a group with $b_1(\Gamma)>0$ whose real cohomology algebra $H^*(\Gamma;\R)$ satisfies 
$3$-dimensional oriented Poincar\'e duality, then $\Gamma$ is not a K\"ahler group.
\end{thm}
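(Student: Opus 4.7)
The plan is to assume $\Ga$ is a K\"ahler group and derive a contradiction by exploiting the Albanese map of a K\"ahler manifold realising $\Ga$. Let $X$ be a compact K\"ahler manifold with $\pi_1(X) = \Ga$, and consider the Albanese map $a \colon X \to T := \mathrm{Alb}(X)$, with image $Y := a(X)$ of complex dimension $d$. Since $b_1(\Ga) > 0$, the map $a$ is non-constant, so $d \geq 1$. The crux of the argument is to show that $d = 1$. Suppose $d \geq 2$; pick a smooth point $p \in Y$ and holomorphic $1$-forms $\omega_1, \dots, \omega_d \in H^0(T, \Om^1)$ whose restrictions to $T_p Y$ form a basis of $T_p^*Y$. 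Setting $\tilde{\omega}_i := a^*\omega_i \in H^0(X, \Om^1)$, the $(d,d)$-form $\tilde{\omega}_1 \wedge \bar{\tilde{\omega}}_1 \wedge \dots \wedge \tilde{\omega}_d \wedge \bar{\tilde{\omega}}_d$ is non-zero on $X$, semi-positive up to a real scalar, and its cohomology class is non-zero in $H^{2d}(X;\R)$ because pairing with a suitable power of the K\"ahler class of $X$ yields a strictly positive integral. In particular the cup product map $\Alt^{2d} H^1(X;\R) \to H^{2d}(X;\R)$ has non-zero image. However, the classifying map $F \colon X \to B\Ga$ is an isomorphism on $H^1$, so by functoriality this image factors through $H^{2d}(\Ga;\R)$, which vanishes for $2d \geq 4$ by three-dimensional Poincar\'e duality. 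The contradiction forces $d = 1$.

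With $d = 1$, Stein-factorise $a = c \circ f$ where $f \colon X \to C$ is surjective and holomorphic with connected fibres onto a smooth compact Riemann surface $C$, and $c \colon C \to T$ is finite. Standard Albanese functoriality shows that $c$ induces a splitting of $\mathrm{Alb}(f) \colon T \to \mathrm{Alb}(C)$; combined with the bound $g(C) \leq b_1(\Ga)/2$ from injectivity of $f^*$ on $H^1$, this forces $g(C) = b_1(\Ga)/2$ and identifies $\mathrm{Alb}(C)$ with $T$. In particular $f^* \colon H^1(C;\R) \to H^1(X;\R) = H^1(\Ga;\R)$ is an isomorphism, and the induced group homomorphism $q \colon \Ga \to \pi := \pi_1(C)$ is surjective with $q^*$ an isomorphism on $H^1$. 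Since $C$ has genus at least $1$, the group $\pi$ is a $2$-dimensional Poincar\'e duality group, so $H^3(\pi;\R) = 0$. Every triple cup product of classes in $H^1(\Ga;\R)$ therefore equals $q^*$ of a triple cup product in $H^3(\pi;\R) = 0$ and so vanishes in $H^3(\Ga;\R)$.

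The contradiction now follows from three-dimensional Poincar\'e duality. For a non-zero holomorphic $1$-form $\omega \in H^{1,0}(C)$ with real and imaginary parts $\alpha', \beta' \in H^1(C;\R)$, the cup product $\alpha' \cup \beta'$ is, up to a non-zero scalar, the K\"ahler class of $C$, and hence $f^*\alpha' \cup f^*\beta' \neq 0$ in $H^2(X;\R)$. Injectivity of $F^*$ on $H^2$ then gives classes $\beta_1, \beta_2 \in H^1(\Ga;\R)$ whose cup product $\beta_1 \cup \beta_2$ is non-zero in $H^2(\Ga;\R)$. By three-dimensional Poincar\'e duality there exists $\alpha \in H^1(\Ga;\R)$ with $\alpha \cup (\beta_1 \cup \beta_2) \neq 0$ in $H^3(\Ga;\R)$, so the triple cup product $\alpha \cup \beta_1 \cup \beta_2$ is non-zero, contradicting the vanishing established in the previous paragraph. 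The main obstacle is the first step: pinning down the complex dimension of the Albanese image, where three-dimensional Poincar\'e duality enters via the vanishing of $H^{2d}(\Ga;\R)$ for $d \geq 2$ and where the positivity of wedges of $(1,0)$-forms pulled back from the Albanese is essential.
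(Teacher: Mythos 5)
Your proof is correct and follows essentially the same route as the paper: use $3$-dimensional Poincar\'e duality to force the Albanese image to be a curve of positive genus, then derive a contradiction because all cup products of degree-one classes of $\Gamma$ factor through the real two-dimensional curve, leaving no room for a nontrivial triple product in $H^3(\Gamma;\R)$. The only difference is one of detail: you make explicit (via positivity of $i^{d^2}\eta\wedge\bar\eta$ and the Stein factorisation/genus count) several steps the paper dispatches as standard facts about Albanese maps.
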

To put this into perspective, recall that many K\"ahler groups are Poincar\'e duality groups (of even dimension), cf.~\cite{JR,Tol,Kl}.
Also recall that, for every $k\geq 3$, Toledo~\cite{Tol} constructed examples of K\"ahler groups of cohomological dimension $2k-1$.
Moreover, his examples are duality (though not Poincar\'e duality) groups.

Of course, to exclude a group from being a K\"ahler group, it is enough that some finite index subgroup satisfy the 
assumptions of Theorem~\ref{t:alb}. Thus Theorem~\ref{t:alb} immediately gives:
\begin{cor}\label{c:asph}
Let $M$ be a closed aspherical three-manifold. If $M$ has a finite orientable covering that is not an $\R$-homology sphere, then
$\pi_1(M)$ is not a K\"ahler group.
\end{cor}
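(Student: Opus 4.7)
The plan is to reduce the statement to \thmref{t:alb} by passing to the given finite cover. Let $\tilde M \to M$ be a finite cover with $\tilde M$ closed, orientable, and not an $\R$-homology sphere, and write $\tilde\Gamma = \pi_1(\tilde M)$. Since $\tilde\Gamma$ sits as a finite-index subgroup of $\Gamma = \pi_1(M)$, it suffices to show that $\tilde\Gamma$ is not a K\"ahler group: if $\Gamma$ were K\"ahler, arising as $\pi_1(X)$ for $X$ a closed K\"ahler manifold, then the finite cover of $X$ corresponding to $\tilde\Gamma$ would be a closed K\"ahler manifold with fundamental group $\tilde\Gamma$, so $\tilde\Gamma$ would be K\"ahler as well.

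Next I would verify that $\tilde\Gamma$ satisfies the hypotheses of \thmref{t:alb}. Because $M$ is aspherical and $\tilde M$ is a covering space of $M$, the manifold $\tilde M$ is also aspherical and hence a $K(\tilde\Gamma,1)$. Therefore $H^*(\tilde\Gamma;\R) \cong H^*(\tilde M;\R)$ as graded algebras. Since $\tilde M$ is a closed oriented three-manifold, this cohomology algebra satisfies three-dimensional oriented Poincar\'e duality over $\R$. Moreover, the assumption that $\tilde M$ is not an $\R$-homology sphere, combined with Poincar\'e duality on $\tilde M$, forces $b_1(\tilde\Gamma) = b_1(\tilde M) > 0$.

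Applying \thmref{t:alb} to $\tilde\Gamma$ shows it is not K\"ahler, and the inheritance of the K\"ahler property under finite-index subgroups then gives the conclusion for $\Gamma$. There is really no obstacle here beyond checking the two standard facts invoked above — asphericity of finite coverings of aspherical manifolds, and the passage of K\"ahlerness to finite-index subgroups — both of which are routine. The corollary is thus essentially a direct packaging of \thmref{t:alb}.
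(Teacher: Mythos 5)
Your argument is correct and is exactly the paper's (implicit) proof: the corollary is stated there as an immediate consequence of Theorem~\ref{t:alb} via passing to the finite orientable cover, using that K\"ahlerness descends to finite-index subgroups and that the cover is aspherical with $b_1>0$. Nothing to add.
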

Theorem~\ref{t:alb} is more general than the Corollary because not every group whose real cohomology satisfies $3$-dimensional 
Poincar\'e duality is the fundamental group of an aspherical three-manifold. This issue is related to the three-dimensional Borel
conjecture; see Problem~3.77 on Kirby's problem list~\cite{Kirby}.

Corollary~\ref{c:asph} proves most of Theorem~\ref{t:main1}, since it handles not only manifolds with a nontrivial JSJ 
decomposition, but also gives a uniform treatment of geometric cases that no longer need to be checked case by case, so 
we obtain quite a simple proof of Theorem~\ref{t:main1}
for groups with virtually positive first Betti number. Using Perelman's geometrisation theorem, the case of first Betti number 
zero can actually be reduced to Theorem~\ref{t:CT}. In Section~\ref{s:proofs} below we first prove Theorem~\ref{t:alb}, and then spell out 
the resulting straightforward proof of Theorem~\ref{t:main1}, avoiding the difficult
arguments of Dimca--Suciu~\cite{DS}, and the appeals to the works of Reznikov and Fujiwara. Like the original proof of~\cite{DS},
the proof of Theorem~\ref{t:main1} given here uses geometrisation only to handle the case of trivial (virtual) first Betti number.

Using the Kodaira classification of non-K\"ahler complex surfaces we shall also prove the following:
\begin{thm} \label{t:main2}
Assume that a group $\Gamma$ is the fundamental group both of a closed complex surface $S$ and of a 
closed three-manifold. Then either $\Gamma$ is a finite subgroup of $O(4)$ and $S$ is a K\"ahler surface, 
or $\Gamma$ is $\Z$ or $\Z\oplus\Z_2$ and $S$ is a surface of class $VII$.
\end{thm}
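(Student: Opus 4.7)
The plan is to combine Theorem~\ref{t:main1} (which handles the K\"ahler case) with the Kodaira--Enriques classification of compact complex surfaces. If $S$ is K\"ahler, Theorem~\ref{t:main1} immediately gives that $\Gamma$ is a finite subgroup of $O(4)$, which is the first alternative. So I assume $S$ is non-K\"ahler. Since $\pi_1$ is invariant under blow-ups and non-K\"ahlerness is preserved by passing to a minimal model, I may assume $S$ is itself minimal. The classification then places $S$ in one of three families, sorted by Kodaira dimension: (i) a class VII surface ($\kappa=-\infty$, $b_1(S)=1$); (ii) a Kodaira surface, primary or secondary ($\kappa=0$); or (iii) a non-K\"ahler properly elliptic surface ($\kappa=1$).

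The central uniform step is to rule out, in one stroke, all of the aspherical cases. Kodaira surfaces are nilmanifolds; Inoue surfaces (the non-Hopf members of class VII with $b_2=0$) are solvmanifolds; and by a structure theorem of Kodaira every minimal non-K\"ahler properly elliptic surface is an elliptic quasi-bundle (all its singular fibres are multiples of smooth elliptic curves) over a hyperbolic orbifold base. In each of these cases $S$ is a closed aspherical four-manifold, so $\Gamma=\pi_1(S)$ is torsion-free with cohomological dimension $4$. But by the Kneser--Milnor prime decomposition together with Perelman's geometrisation theorem, every torsion-free fundamental group of a closed three-manifold is a free product of copies of $\Z$ and fundamental groups of closed aspherical three-manifolds, and so has cohomological dimension at most $3$. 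This contradiction eliminates all of these surfaces at once.

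What remains are the Hopf surfaces and the class VII surfaces with $b_2>0$. For a Hopf surface, $\pi_1(S)$ is a finite extension of $\Z$ realised as a discrete group acting freely by holomorphic automorphisms on $\C^2\setminus\{0\}$, and Kodaira's explicit classification enumerates the possibilities. Intersecting that list with the fundamental groups of closed (possibly non-orientable) three-manifolds that are virtually $\Z$---which, by the prime decomposition and the classification of three-manifolds with $\pi_1=\Z$, amounts to analysing free quotients of $S^1\times S^2$ and $S^1\times\RP^2$---leaves exactly $\Z$ and $\Z\oplus\Z_2$, realised respectively by $S^1\times S^2$ and by the non-orientable $S^1\times\RP^2$. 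For class VII with $b_2>0$, all known examples are Kato surfaces with $\pi_1(S)=\Z$, and Teleman's results on the existence of a global spherical shell for small $b_2$ reduce those cases to the Kato setting.

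The main obstacle will be the class VII surfaces with $b_2>0$: because the Kodaira classification of these surfaces is still conjectural, one cannot simply enumerate the possible fundamental groups. Instead one must exploit the structural information that is available---the negative-definite intersection form on $H^2(S)$, the presence of special curves, and the action of $\pi_1(S)$ on the universal cover---together with the constraint of being a three-manifold group, in order to pin down $\pi_1(S)=\Z$.
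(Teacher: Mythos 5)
Your outline agrees with the paper up to and including the aspherical cases: the K\"ahler alternative is dispatched by Theorem~\ref{t:main1}, and your cohomological-dimension argument (an aspherical surface forces $\Gamma$ to be torsion-free of cohomological dimension $4$, while a torsion-free three-manifold group has cohomological dimension at most $3$) is an acceptable substitute for the paper's comparison of four- versus three-dimensional Poincar\'e duality groups, and even spares you the reduction to prime $M$ in those cases. The genuine gap is exactly where you place your ``main obstacle'': class $VII$ surfaces with $b_2>0$. Since the classification of these surfaces (the global spherical shell conjecture) is open, the appeal to ``all known examples are Kato surfaces'' and to Teleman's results for small $b_2$ does not yield a proof, and you give no argument that actually pins down $\pi_1(S)$. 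As written, the proposal is therefore incomplete.

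The paper closes this case without any classification input beyond two classical facts valid for \emph{every} class $VII$ surface: $b_1(S)=1$ and the intersection form on $H^2(S;\R)$ is negative definite. One first reduces to prime $M$: by \cite[Thm.~1.35]{ABCKT} the fundamental group of a class $VII$ surface cannot split as $\Gamma_1\star\Gamma_2$ with both factors admitting proper finite-index subgroups, and since three-manifold groups are residually finite this gives free indecomposability, hence $M$ prime. If $M$ is prime and aspherical, then $\Gamma$ is a three-dimensional Poincar\'e duality group with $b_1(\Gamma)=1$, hence $b_2(\Gamma)=1$ and $H^4(\Gamma;\R)=0$; the image of $H^2(\Gamma;\R)$ in $H^2(S;\R)$ under the classifying map of the universal covering is then a nonzero isotropic line, forcing the intersection form of $S$ to be indefinite --- contradicting negative definiteness. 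This leaves only the prime non-aspherical possibilities for $M$, namely $S^1\times S^2$, the twisted $S^2$-bundle over $S^1$, and $S^1\times\RP^2$, giving $\Gamma=\Z$ or $\Z\oplus\Z_2$, both realised by Hopf surfaces. You should replace your final paragraph by this intersection-form argument (or something equivalent); without it the theorem is not proved.
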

This is interesting since in real dimension $6$ every finitely presentable group is the fundamental group of a 
compact complex manifold, as proved by Taubes~\cite{T}. Thus, for fundamental group questions, complex surfaces are at the 
watershed between curves and the unrestricted case of complex three-folds, just like three-manifolds are at the watershed between 
real surfaces and the case of four-manifolds, where all finitely presentable groups appear.

\section{Proofs}\label{s:proofs}

\begin{proof}[Proof of Theorem~\ref{t:alb}]
Suppose for a contradiction that $X$ is a closed K\"ahler manifold with fundamental group $\Gamma$, and let 
$\alpha_X\colon X\longrightarrow T^{b_1(\Gamma)}$
be its Albanese map. By the universal property of classifying maps, $\alpha_X$ factors up to homotopy into a composition
$$
X\stackrel{c_X}{\longrightarrow} B\Gamma \stackrel{a}{\longrightarrow} B\Z^{b_1(\Gamma)} = T^{b_1(\Gamma)} \ ,
$$
where $c_X$ is the classifying map of the universal covering of $X$.
One concludes that $\alpha_X^* = c_X^*\circ a^*$ is trivial in real cohomology of degree $>3$ because $B\Gamma$ has no
such cohomology, and so the image of $\alpha_X$ cannot have complex dimension $2$ or more.
Thus the image of $\alpha_X$ is a complex curve $C$. 

It is well known, and easy to see, that a one-dimensional Albanese image must be smooth, and of course it has positive genus.
Thus the Albanese map $\alpha_X$ factors as 
$$
X\stackrel{c_X}{\longrightarrow} B\Gamma \stackrel{\hat a}{\longrightarrow} C \ .
$$
All the maps above induce isomorphisms in degree one cohomology. Moreover, $\alpha_X^*$ is nontrivial in degree $2$ 
cohomology, and so the same is true for $\hat a^*$. However, there is no class in $H^1(\Gamma;\R)$
that has a nontrivial cup product with the image of $\hat a^*$ in $H^2(\Gamma;\R)$, since this cup product comes from $C$, 
which has real dimension $=2$. This contradicts the assumption that $\Gamma$ satisfies $3$-dimensional Poincar\'e duality.
\end{proof}

\begin{proof}[Proof of Theorem~\ref{t:main1}]
We need to show that an infinite three-manifold group $\Gamma$ cannot be K\"ahler. Since finite coverings of 
K\"ahler manifolds are K\"ahler, we only need to exclude some finite index subgroup of $\Gamma$, 
and so three-manifolds can be replaced by their finite coverings. In particular
we may assume that all three-manifolds are orientable. 

We may restrict our attention to three-manifolds that are prime in the sense of being indecomposable under connected
sums, since a nontrivial free product is never a K\"ahler group by Theorem~\ref{t:G}. Such a prime
three-manifold is either $S^1\times S^2$, or is aspherical, cf.~\cite{M}. Since a K\"ahler group cannot be infinite cyclic,
we are reduced to the consideration of aspherical three-manifolds, so that, for all $3$-manifolds with positive (virtual) 
first Betti number, Theorem~\ref{t:main1} follows from Corollary~\ref{c:asph}, which in turn follows from Theorem~\ref{t:alb}
proved above.

To complete the proof of Theorem~\ref{t:main1} it remains to deal with groups with vanishing first Betti number.
Thus consider a closed oriented aspherical three-manifold $M$ with infinite fundamental group $\Gamma$ having $b_1(\Gamma)=0$.
If $M$ contains an incompressible torus, then by a result of Luecke~\cite{L}, see also~\cite{K}, $M$ has a finite covering with 
positive first Betti number, so that Corollary~\ref{c:asph} applied to this covering shows that
$\Gamma$ is not K\"ahler. Thus we are left with the case of an aspherical $M$ that contains no incompressible torus. 
Such manifolds are hyperbolic by the work of Perelman~\cite{P1,P2,KL}, and fundamental groups of hyperbolic three-manifolds are 
never K\"ahler by Theorem~\ref{t:CT}.
\end{proof}

\begin{proof}[Proof of Theorem~\ref{t:main2}]
Suppose that $\Gamma$ is the fundamental group of both a compact complex surface $S$ and a closed three-manifold $M$.
As before we may assume $M$ to be orientable.

If $S$ is K\"ahler, then $\Gamma$ is finite by Theorem~\ref{t:main1}. Conversely, if $\Gamma$ is finite, then the first Betti number 
of $S$ vanishes, and so $S$ is K\"ahlerian, cf.~\cite{Buch}. 

If $S$ is not K\"ahlerian, then its first Betti number is odd, see again~\cite{Buch}. We now use the Enriques--Kodaira classification
to conclude that either $S$ is properly elliptic with $b_1(S)\geq 3$, or $S$ is of class $VII$ with $b_1(S)=1$, cf.~\cite{BPV,N}. In the first 
case $\Gamma$ is freely indecomposable and is a Poincar\'e duality group of dimension $4$ by results of Kodaira described 
in~\cite[Section~3 of Ch.~1]{ABCKT}. In the second case, it is known only that $\pi_1(S)$ cannot split into $\Gamma_1\star\Gamma_2$
with both $\Gamma_i$ containing proper subgroups of finite index; see~\cite[Thm.~1.35]{ABCKT}. However, since three-manifold groups
are residually finite~\cite{H}\footnote{The reference~\cite{H} treats only manifolds satisfying Thurston's geometrisation conjecture. 
By Perelman's work~\cite{P1,P2,KL} this is not a restriction.}, this is enough to conclude that in our case, where 
$\pi_1(S)=\Gamma=\pi_1(M)$, $\Gamma$ is indeed freely indecomposable.

Thus we may assume that $M$ is prime. If it is aspherical, then $\Gamma$ is a three-dimensional Poincar\'e duality group. 
This means that $\Gamma$ is not the fundamental group of a properly elliptic surface with $b_1(S)\geq 3$ since those groups are four-dimensional 
Poincar\'e duality groups. If $\Gamma$ is the fundamental group of a class $VII$ surface, then we have $b_1(\Gamma)=1$, and, by Poincar\'e 
duality on $M$, $b_2(\Gamma)=1$. Under the classifying map of the universal covering of $S$, $H^2(\Gamma;\R)$ injects into $H^2(S;\R)$, 
where it becomes an isotropic subspace for the cup product for dimension reasons. (Its cup square comes from the three-dimensional $M$.) 
Thus the intersection form of $S$ would have to be indefinite, which contradicts the known fact that the intersection forms of class $VII$ surfaces 
are negative definite; see~\cite[Lemma~1.45]{ABCKT}.

Thus we are left to consider the case of an $M$ that is prime but not aspherical. This means that $M$ is $S^1\times S^2$ 
if it is orientable; cf.~\cite{M}. However, for a nonorientable $M$ we could also have the nontrivial $S^2$-bundle over $S^1$, also with 
fundamental group $\Z$, and $S^1\times\R P^2$, with fundamental group $\Z\oplus\Z_2$; cf.~\cite{Scott}. Both $\Z$ and $\Z\oplus\Z_2$
occur as fundamental groups of Hopf surfaces. Conversely, every surface with one of these fundamental groups is of class $VII$; cf.~\cite{BPV,N}.
This completes the proof of Theorem~\ref{t:main2}.
\end{proof}

\section{Discussion}

\subsection{Avoiding the use of Theorem~\ref{t:G}}

In the proof of Theorem~\ref{t:main1} in Section~\ref{s:proofs}, I found it most straightforward to reduce to the consideration of prime 
three-manifolds by using Gromov's result on free products, stated as Theorem~\ref{t:G} in the introduction. However, one can completely 
bypass the use of Theorem~\ref{t:G}, as we now explain.

\begin{lem}\label{l:JR}
Assume that $\Gamma_1$ and $\Gamma_2$ each have a non-trivial finite quotient $f_i\colon\Gamma_i\longrightarrow Q_i$. Then
their free product $\Gamma_1\star\Gamma_2$ has a finite index subgroup with odd first Betti number.
\end{lem}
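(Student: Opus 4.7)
My plan is to produce an explicit finite-index subgroup of $G := \Gamma_1 \star \Gamma_2$ whose first Betti number can be computed via Bass-Serre theory, and then pass to a further index-$2$ subgroup to force the parity.

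First I would combine the finite quotients using the universal property of the free product (via the inclusions $Q_i \hookrightarrow Q_1 \times Q_2$) to build a surjection $\phi\colon G \twoheadrightarrow Q_1 \times Q_2$, and set $K := \ker\phi$, normal of index $|Q_1||Q_2|$. Since $G$ acts on its Bass-Serre tree $T$ with vertex stabilizers $\Gamma_i$ and trivial edge stabilizers, restricting to the normal subgroup $K$ and using the identification $K \backslash G / \Gamma_i = (Q_1 \times Q_2)/\phi(\Gamma_i)$ yields $|Q_{3-i}|$ orbits of type-$i$ vertices (each with stabilizer a conjugate of $\ker f_i$) and $|Q_1||Q_2|$ edge orbits. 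Thus $K \backslash T$ is a connected bipartite graph of groups with vertex groups $\ker f_1$ (multiplicity $|Q_2|$) and $\ker f_2$ (multiplicity $|Q_1|$), trivial edge groups, and underlying first Betti number $r := (|Q_1|-1)(|Q_2|-1) \geq 1$. Consequently $K$ is isomorphic to the free product of $|Q_2|$ copies of $\ker f_1$, $|Q_1|$ copies of $\ker f_2$, and a free group $F_r$.

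Next I would exploit the free factor. Define $\psi \colon K \twoheadrightarrow \Z/2$ as the composition of the retraction $K \twoheadrightarrow F_r$ (killing all vertex groups) with the surjection $F_r \twoheadrightarrow \Z/2$ sending each free generator to $1$; this map vanishes on every vertex group. Letting $K' := \ker\psi$ (index $2$ in $K$) and considering the $K'$-action on the same tree $T$, since $\psi$ vanishes on every vertex stabilizer and on the trivial edge stabilizers, each vertex orbit and each edge orbit of $K \backslash T$ splits into exactly two orbits in $K' \backslash T$ with stabilizers preserved. Hence $K' \backslash T$ is a connected $2$-fold cover of $K \backslash T$, with underlying first Betti number $2r - 1$, and $K'$ is isomorphic to the free product of $2|Q_2|$ copies of $\ker f_1$, $2|Q_1|$ copies of $\ker f_2$, and $F_{2r-1}$. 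Since abelianization distributes over free products,
\[
b_1(K') = 2|Q_2|\,b_1(\ker f_1) + 2|Q_1|\,b_1(\ker f_2) + (2r-1),
\]
which is odd.

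The hard part will be the Bass-Serre bookkeeping in the first stage---namely, using the normality of $K$ to identify orbit counts via $K\backslash G/\Gamma_i=(Q_1\times Q_2)/\phi(\Gamma_i)$, and then extracting the free rank $r = (|Q_1|-1)(|Q_2|-1)$ from the Euler characteristic of the bipartite quotient graph. I emphasize that the first stage alone is insufficient: when $|Q_1|$ and $|Q_2|$ are both odd (for example if $\Gamma_1 = \Gamma_2 = \Z/3$, whose only nontrivial finite quotients are cyclic of order $3$), every term in the analogous formula $b_1(K) = |Q_2| b_1(\ker f_1) + |Q_1| b_1(\ker f_2) + r$ may be even. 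The index-$2$ doubling in the second stage, contributing $2r-1$, is precisely what forces oddness.
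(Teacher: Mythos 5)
Your proposal is correct and follows essentially the same route as the paper: the paper passes to $\ker\bigl(\Gamma_1\star\Gamma_2\to Q_1\times Q_2\bigr)\cong F_k\star\Gamma$ with $k=(|Q_1|-1)(|Q_2|-1)$ via the Kurosh subgroup theorem (you derive the same decomposition by Bass--Serre theory), and then takes the kernel of a homomorphism onto a finite group of even order $d$ that is the given quotient on $F_k$ and trivial on $\Gamma$ --- which for $d=2$ is exactly your $\psi$. The paper's resulting formula $b_1(\Delta)=1+d\,(k-1+b_1(\Gamma))$ specializes at $d=2$ to your $2|Q_2|\,b_1(\ker f_1)+2|Q_1|\,b_1(\ker f_2)+(2r-1)$.
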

\begin{proof}
Consider the induced homomorphism $f\colon\Gamma_1\star\Gamma_2\longrightarrow Q_1\times Q_2$. By the Kurosh subgroup 
theorem, its kernel is of the form $F_k\star\Gamma$, where $F_k$ is a free group of rank $k=(\vert Q_1\vert -1)(\vert Q_2\vert -1)$,
and $\Gamma$ is a free product of copies of the kernels of the $f_i$. For a finite quotient $g\colon F_k\longrightarrow Q$ of order $d$ we 
consider the kernel $\Delta$ of $\bar g\colon F_k\star\Gamma\longrightarrow Q$, where $\bar g$ restricts to $F_k$ as $g$ and is 
trivial on $\Gamma$. Then $\Delta$ is isomorphic to $F_l\star\Gamma\star\ldots\star\Gamma$ with $d$ copies of $\Gamma$ appearing,
and $l=1+d(k-1)$. Thus $\Delta\subset \Gamma_1\star\Gamma_2$ is a finite index subgroup with 
$$
b_1(\Delta) = l+d\cdot b_1(\Gamma) = 1+d\cdot (k-1+b_1(\Gamma)) \ .
$$
Choosing $d$ to be even, we have found the desired subgroup.
\end{proof}

Since three-manifold groups are residually finite~\cite{H}, we have the following:
\begin{cor}
If $M$ is a non-prime three-manifold, then it has a finite covering with odd first Betti number.
\end{cor}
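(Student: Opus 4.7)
The plan is to combine Lemma~\ref{l:JR} with the connected sum decomposition and residual finiteness of three-manifold groups. Since $M$ is non-prime, by definition it admits a non-trivial connected sum decomposition $M = M_1 \# M_2$ where neither $M_i$ is a sphere. Van Kampen's theorem (applied after noting that the separating $2$-sphere is simply connected) identifies
$$
\pi_1(M) \;\cong\; \pi_1(M_1) \star \pi_1(M_2).
$$
So the corollary is reduced to showing that the hypothesis of Lemma~\ref{l:JR} is satisfied by $\Gamma_i := \pi_1(M_i)$ for $i=1,2$.

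To verify that hypothesis, I would first argue that each $\Gamma_i$ is non-trivial. If, say, $\Gamma_1$ were trivial, then since $M_1$ is a closed three-manifold, Perelman's resolution of the Poincar\'e conjecture~\cite{P1,P2,KL,MT} would force $M_1 \simeq S^3$, contradicting the non-triviality of the connected sum decomposition (note that for non-orientable $M$ the same reasoning applies, since the summands in a prime decomposition are either orientable or $S^2$-bundles over $S^1$, all of which have non-trivial $\pi_1$ unless homeomorphic to $S^3$). Having established non-triviality, I then invoke the fact cited in the excerpt (and its footnote) that three-manifold groups are residually finite~\cite{H}, which together with Perelman's work is unconditional. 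Residual finiteness of the non-trivial group $\Gamma_i$ provides a non-trivial homomorphism $f_i\colon \Gamma_i \longrightarrow Q_i$ onto a non-trivial finite quotient.

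Lemma~\ref{l:JR} now applies verbatim to $\Gamma_1 \star \Gamma_2 = \pi_1(M)$ and yields a finite index subgroup $\Delta \subset \pi_1(M)$ with odd first Betti number. By the Galois correspondence between finite index subgroups of $\pi_1(M)$ and finite (connected) coverings of $M$, the subgroup $\Delta$ corresponds to a finite cover $\widetilde M \to M$ with $b_1(\widetilde M) = b_1(\Delta)$ odd, completing the proof.

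The argument has no real obstacles beyond what is already encapsulated in the preceding lemma and in residual finiteness; the only point requiring slight care is ruling out $\pi_1(M_i) = 1$, which is why Perelman's theorem enters, precisely as in the footnote to~\cite{H}.
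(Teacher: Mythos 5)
Your proof is correct and follows exactly the route the paper intends: the paper states the corollary as an immediate consequence of Lemma~\ref{l:JR} together with residual finiteness of three-manifold groups, which is precisely your argument (connected sum gives a free product via van Kampen, residual finiteness of the non-trivial factors supplies the finite quotients, and the lemma produces the finite index subgroup, i.e.\ the covering). Your extra care in ruling out $\pi_1(M_i)=1$ via Perelman is a reasonable detail that the paper leaves implicit.
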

At the expense of appealing to residual finiteness, we can use this Corollary in place of Theorem~\ref{t:G} to exclude non-prime manifolds
from consideration in the proof of Theorem~\ref{t:main1}. More generally, without restricting to three-manifold groups, Lemma~\ref{l:JR}
tells us that an arbitrary free product whose free factors admit finite quotients cannot be a K\"ahler group. This is exactly the special case of 
Theorem~\ref{t:G} originally proved by Johnson and Rees~\cite{JR}. Indeed our proof of the Lemma is a simplification of the argument 
in~\cite{JR}.

\subsection{The necessity to discuss $\R$-homology spheres}

In the proof of Theorem~\ref{t:main1} it was necessary to consider separately the case of groups with zero first Betti number. This step would be
superfluous, if it were known that every closed three-manifold has a finite covering with positive first Betti number. If such a statement
were available, then one would not need Theorem~\ref{t:CT} for the proof of Theorem~\ref{t:main1} given here. 

Apparently the question of whether every closed three-manifold with infinite fundamental group has virtually positive first Betti 
number was raised long ago by Waldhausen, Thurston, and others; see Problems~3.2 and 3.50 in Kirby's problem list~\cite{Kirby}
and the references given there. Curiously, those references do not include~\cite{Hempel,L} and other papers quoted in~\cite{Hempel},
all of which contain a wealth of information about this problem. In any case, this problem seems to be still open.

\subsection{The second Betti number of infinite K\"ahler groups}

Carlson and Toledo have asked whether an infinite K\"ahler group has virtually positive second Betti number\footnote{The original reference
for their question is Section~18.16 in~\cite{Kol}, where only a more specific version is formulated.}. If this were known to 
be true, then, because of three-dimensional Poincar\'e duality, we would not have to consider $\R$-homology $3$-spheres in 
the proof of Theorem~\ref{t:main1}. Moreover, we would not need to use geometrisation, and we would not need Theorem~\ref{t:CT} either!
We refer to the paper of Klingler~\cite{Kl} for a recent discussion of this question of Carlson and Toledo.

Unfortunately, a slight misstatement occurs in~\cite[Prop.~3.44 (i)]{ABCKT}, which implicitly asserts a positive answer to the 
question of Carlson and Toledo. The statement $b_2(\pi_1(X))\geq 1$ there should be replaced by $b_2(X)\geq 1$ (which is trivial).
The Proposition in question was proved by Amor\'os~\cite{A}, whose paper does not contain the misstatement.

\bibliographystyle{amsplain}

\begin{thebibliography}{10}

\bibitem{A}
J.~Amor\'os, {\em On the Malcev completion of K\"ahler groups}, Comment.~Math.~Helv.~{\bf 71} (1996), 192--212.

\bibitem{ABCKT}
J.~Amor\'os, M.~Burger, K.~Corlette, D.~Kotschick and D.~Toledo, 
{\sl Fundamental Groups of Compact K\"ahler Manifolds}, Mathematical Surveys and Monographs, Vol.~{\bf 44}, 
Amer.~Math.~Soc., Providence, R.I.~1996.

\bibitem{BPV}
W.~Barth, C.~Peters and A.~Van~de~Ven, {\sl Compact Complex Surfaces},
Springer-Verlag, Berlin 1984.

\bibitem{Buch}
N.~Buchdahl, {\it On compact K\"ahler surfaces}, Ann.~Inst.~Fourier {\bf 49} (1999), 287--302.

\bibitem{CT}
J.~A.~Carlson and D.~Toledo, {\em Harmonic mapping of K\"ahler manifolds to locally symmetric spaces},
Publ.~Math.~I.H.E.S.~{\bf 69} (1989), 173--201.


\bibitem{DS}
A.~Dimca and A.~I.~Suciu, {\em Which $3$-manifold groups are K\"ahler groups?}, J.~Eur.~Math.~Soc.~{\bf 11} (2009), 521--528.

\bibitem{G}
M.~Gromov, {\em Sur le groupe fondamental d'une vari\'et\'e k\"ahl\'erienne}, C.~R.~Acad.~Sci.~Paris S\'er.~I Math.~{\bf 308} (1989), 67--70. 

\bibitem{Hempel}
J.~Hempel, {\em Virtually Haken manifolds}, in {\sl Combinatorial methods in topology and algebraic geometry} (Rochester, N.Y., 1982), 
149--155, Contemp.~Math., 44, Amer. Math. Soc., Providence, RI, 1985.

\bibitem{H}
J.~Hempel, {\em Residual finiteness for $3$-manifolds}, in {\sl Combinatorial groups theory and topology}, ed.~S.~M.~Gersten and 
J.~R.~Stallings, Annals of Math.~Studies vol.~111, Princeton Univ.~Press 1987.

\bibitem{Her}
L.~Hern\'andez-Lamoneda, {\em Non-positively curved $3$-manifolds with non-K\"ahler $\pi_1$}, C.~R.~Acad.~Sci.~Paris S\'er.~I {\bf 332} (2001), 249--252.

\bibitem{JR}
F.~E.~A.~Johnson and E.~G.~Rees, {\em On the fundamental group of a complex algebraic manifold}, 
Bull.~London Math.~Soc.~{\bf 19} (1987), 463--466.

\bibitem{Kirby}
R.~Kirby, {\em Problems in low-dimensional topology}, in {\sl Geometric Topology}, vol.~2, ed.~W.~H.~Kazez, AMS/IP Studies in 
Advanced Mathematics Vol.~2 part 2, American Mathematical Society and International Press 1997.

\bibitem{KL}
B.~Kleiner and J.~Lott, {\em Notes on Perelman's papers}, Geom.~Topol.~{\bf 12} (2008), 2587--2855. 

\bibitem{Kl}
B.~Klingler, {\em K\"ahler groups and duality}, Preprint arXiv:1005.2836v1 [math.GR] 17 May 2010.

\bibitem{K}
S.~Kojima, {\em Finite covers of $3$-manifolds containing essential surfaces of Euler characteristic $=0$}, Proc.~Amer. Math.~Soc.~{\bf 101} (1987), 743--747.

\bibitem{Kol}
J.~Koll\'ar, {\sl Shafarevich maps and automorphic forms}, Princeton Univ.~Press, Princeton, NJ 1995.

\bibitem{L}
L.~Luecke, {\em Finite covers of $3$-manifolds containing essential tori}, Trans.~Amer.~Math.~Soc.~{\bf 310} (1988), 381--391.

\bibitem{M}
J.~W.~Milnor, {\em A unique decomposition theorem for $3$-manifolds}, Amer.~J.~Math.~{\bf 84} (1962), 1--7. 

\bibitem{MT}
J.~W.~Morgan and G.~Tian, {\sl Ricci flow and the Poincar\'e conjecture}, Amer.~Math.~Soc.~and Clay Math.~Institute 2007.

\bibitem{N}
I.~Nakamura, {\em Towards classification of non-K\"ahlerian complex surfaces}, Sugaku Exp.~{\bf 2} (1989), 209--229.

\bibitem{P1}
G.~Perelman, {\em The entropy formula for the Ricci flow and its geometric applications}, Preprint arXiv:math/ 0211159v1 [math.DG] 11 Nov 2002.

\bibitem{P2}
G.~Perelman, {\em Ricci flow with surgery on three-manifolds}, Preprint arXiv:math/0303109v1 [math.DG] 10 Mar 2003.

\bibitem{Scott}
P.~Scott, {\em The geometries of $3$-manifolds}, Bull.~London Math.~Soc.~{\bf 15} (1983), 401--487.

\bibitem{T} 
C.~H.~Taubes, {\em The existence of anti-self-dual conformal structures}, J.~Differential Geometry {\bf 36} (1992), 163--253. 

\bibitem{Tol}
D.~Toledo, {\em Examples of fundamental groups of compact K\"ahler manifolds}, Bull.~London Math.~Soc.~{\bf 22} (1990), 339--343.

\end{thebibliography}

\bigskip

\end{document}